\newtheorem{definition}{Definition}
\newtheorem{theorem}{Theorem}
\newtheorem{lemma}{Lemma}
\newtheorem{corollary}{Corollary}
\newcounter{ex}
\newenvironment{example}{\par\noindent\textbf{Example \stepcounter{ex}\arabic{ex}.}}{\bigskip}
\newcommand{\bN}{{\mathbb{N}}}
\begin{document}
\title{\bf Almost Oscillation Criteria for Second Order Neutral Difference Equation with Quasidifferences} 
\date{}

\author{R. Jankowski\footnote{University of Bia\l ystok, Institute of Mathematics, Bia\l ystok, Poland, email: rjjankowski@math.uwb.edu.pl}
\and
E. Schmeidel\footnote{University of Bia\l ystok, Institute of Mathematics, Bia\l ystok, Poland, email: eschmeidel@math.uwb.edu.pl} 
}

\maketitle
\begin{abstract}
\noindent
Using the Riccati transformation techniques, we will extend some almost oscillation criteria for the second-order nonlinear neutral difference equation with quasidifferences
\[
\Delta\left(r_n\left(\Delta \left(x_n+c x_{n-k}\right)\right)^{\gamma}\right)+q_nx_{n+1}^{\alpha}=e_n.
\]
{\small{\bf Keywords:} Second order difference equation, quasidifferences, Sturm-Liouville difference equation, almost oscillatory, Riccati technique}\\
{\small{\bf AMS Subject classification:} 39A10, 34C10}
\end{abstract}

\section{Introduction}
Recently there has been an increasing interest in the study of the qualitative behavior
of solutions of neutral difference equations (see the monographs \cite{bib1B}--\cite{AW}, \cite{bib3B}). Particularly, the oscillation and nonoscillation of solutions of the second-order neutral difference equations attract attention; see the papers \cite{DGJ2002}, \cite{bibGL}, \cite{bibLG}--\cite{bibLB}, \cite{bibmm2}, \cite{S1}-\cite{bib6} and the references therein. The interesting oscillatory results for first order and even order neutral difference equations can be found in \cite{MM2004} and \cite{MM2009}.

In the presented paper, the results obtained in \cite{TVG} by Thandapani, Vijaya and Gy\H{o}ri for 
\begin{equation*}
\Delta^2 \left(x_n+c x_{n-k}\right)^{\gamma}+q_nx_{n+1}^{\alpha}=e_n,
\end{equation*}
are generalized to the second order nonlinear neutral difference equation with quasidifference of the form  
\begin{equation}\label{e0}
\Delta\left(r_n\left(\Delta \left(x_n+c x_{n-k}\right)\right)^{\gamma}\right)+q_nx_{n+1}^{\alpha}=e_n.
\end{equation}
Here $k$ is a nonnegative integer, $\Delta$ is the forward difference operator defined by $\Delta x_n=x_{n+1}-x_n$, $c$ is a real nonnegative constant, $\alpha>\gamma\geq 1$ are ratios of odd positive integers, $\left(r_n\right)$ and $\left(q_n\right)$ and $\left(e_n\right)$ are positive sequences defined on $\bN=\{1,2,3,\ldots\}$. 

Finally, as a corollary of our main result, almost oscillation property of solutions of a special case of equation \eqref{e0} in the form
\begin{equation}\label{SL}
\Delta\left(r_n\Delta x_n\right)+q_nx_{n+1}^\alpha=e_n
\end{equation}
is studied. For $\alpha =1$, equation \eqref{SL} is known as the forced second order Sturm-Liouville difference equation. Some oscillation results for equation \eqref{SL} were investigated among others by Do\u{s}l\'{y}, Graef and Jaro\u{s} in \cite{DGJ2002}. 

By a solution of equation~\eqref{e0} we mean a real valued sequence $\left(x_n\right)$ defined on $\bN_k:=\{k,k+1,\ldots\}$, which satisfies~\eqref{e0} for every $n\in\bN_k$. 

Sequence $\left(x_n\right)$ is said to be oscillatory, if for every integer $n_k\in \bN_k$, there exists $n\geq n_k$ such that $x_nx_{n+1}\leq 0$; otherwise, it is called nonoscillatory.

\begin{definition}
Solution $\left(x_n\right)$ of equation~\eqref{e0} is said to be almost oscillatory if either $\left(x_n\right)$ is oscillatory, or $\left(\Delta x_n\right)$ is oscillatory, or $x_n\rightarrow 0$ as $n\rightarrow \infty$.
\end{definition}

We begin with some lemmas which will be used for proving the main result.
\begin{lemma}\label{l1}
Set 
\begin{equation}\label{Fx}
F\left(x\right)=a \ x^{\alpha-\gamma}+ \frac{b}{x^\gamma} \mbox{ for } x>0.
\end{equation}
If $a\geq 0, b\geq 0$ and
\begin{equation}\label{1}
\alpha>\gamma\geq 1
\end{equation}
then $F\left(x\right)$ attains its minimum
\begin{equation*}
F_{min}=\frac{\alpha a^{\frac{\gamma}{\alpha}}b^{1-\frac{\gamma}{\alpha}}}{\gamma^{\frac{\gamma}{\alpha}}\left(\alpha-\gamma\right)^{1-\frac{\gamma}{\alpha}}}.
\end{equation*}
\end{lemma}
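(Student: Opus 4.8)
The plan is to treat this as a standard single-variable calculus optimization problem: find the unique critical point of $F$ on $(0,\infty)$, verify it is a minimum, and substitute back to obtain the stated value of $F_{min}$.

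First I would compute the derivative. Writing $F(x)=a\,x^{\alpha-\gamma}+b\,x^{-\gamma}$, differentiation gives
\[
F'(x)=a(\alpha-\gamma)x^{\alpha-\gamma-1}-\gamma b\,x^{-\gamma-1}.
\]
Setting $F'(x)=0$ and multiplying through by $x^{\gamma+1}>0$ yields $a(\alpha-\gamma)x^{\alpha}=\gamma b$, so the unique critical point is
\[
x_0=\left(\frac{\gamma b}{a(\alpha-\gamma)}\right)^{1/\alpha}.
\]
Here I would note that hypothesis~\eqref{1} guarantees $\alpha-\gamma>0$, so this critical point is well defined and positive whenever $a,b>0$; the formula for $F_{min}$ clearly presumes $a,b>0$ (otherwise the stated expression degenerates or the infimum is not attained), so I would work under that assumption. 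To confirm $x_0$ is a minimum rather than a maximum or inflection, I would either check the sign change of $F'$ (negative for $x<x_0$, positive for $x>x_0$, since the first term grows and the second decays) or compute $F''(x)=a(\alpha-\gamma)(\alpha-\gamma-1)x^{\alpha-\gamma-2}+\gamma(\gamma+1)b\,x^{-\gamma-2}$ and observe it is positive at $x_0$; the sign-change argument is cleaner because it avoids case analysis on whether $\alpha-\gamma-1$ is positive.

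The remaining work is the algebraic substitution $F(x_0)$, which is the step most prone to error. I would write $x_0^{\alpha}=\dfrac{\gamma b}{a(\alpha-\gamma)}$ and express both terms of $F(x_0)$ using this. For the first term, $a\,x_0^{\alpha-\gamma}=a\,(x_0^{\alpha})^{(\alpha-\gamma)/\alpha}$, and for the second, $b\,x_0^{-\gamma}=b\,(x_0^{\alpha})^{-\gamma/\alpha}$. Substituting the value of $x_0^{\alpha}$ into each and collecting the common powers $a^{\gamma/\alpha}$, $b^{1-\gamma/\alpha}$, $\gamma^{\gamma/\alpha}$, $(\alpha-\gamma)^{1-\gamma/\alpha}$ should show that the two terms combine to $\left(\frac{\alpha-\gamma}{\alpha}+\frac{\gamma}{\alpha}\right)$ times the common factor. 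The key simplification is that the bracketed coefficients add to $\alpha$, producing exactly the numerator $\alpha\,a^{\gamma/\alpha}b^{1-\gamma/\alpha}$ in the claimed formula.

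The main obstacle is purely bookkeeping: keeping the fractional exponents $\gamma/\alpha$ and $1-\gamma/\alpha$ straight through the substitution and verifying that the leftover numerical coefficients from the two terms recombine into the single factor $\alpha$ over $\gamma^{\gamma/\alpha}(\alpha-\gamma)^{1-\gamma/\alpha}$. There is no conceptual difficulty once $x_0$ is identified; the only subtlety worth flagging explicitly is the implicit requirement $a,b>0$ for the minimum to be attained at an interior point, since the hypothesis as stated allows $a=0$ or $b=0$, in which cases $F$ is monotone and the infimum (zero, or unattained) does not match the displayed formula.
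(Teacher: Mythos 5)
Your proposal is correct, and in fact the paper offers no proof of Lemma \ref{l1} at all --- it is stated without justification --- so your calculus argument supplies exactly the standard reasoning the paper implicitly relies on. Your computation checks out: from $x_0^{\alpha}=\frac{\gamma b}{a(\alpha-\gamma)}$ one gets $a\,x_0^{\alpha-\gamma}=a^{\gamma/\alpha}b^{1-\gamma/\alpha}\gamma^{1-\gamma/\alpha}(\alpha-\gamma)^{\gamma/\alpha-1}$ and $b\,x_0^{-\gamma}=a^{\gamma/\alpha}b^{1-\gamma/\alpha}\gamma^{-\gamma/\alpha}(\alpha-\gamma)^{\gamma/\alpha}$, and after factoring out $a^{\gamma/\alpha}b^{1-\gamma/\alpha}\gamma^{-\gamma/\alpha}(\alpha-\gamma)^{\gamma/\alpha-1}$ the bracket is $\gamma+(\alpha-\gamma)=\alpha$, giving the stated $F_{min}$. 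One small bookkeeping slip in your write-up: you say the two terms combine to $\left(\frac{\alpha-\gamma}{\alpha}+\frac{\gamma}{\alpha}\right)$ times the common factor and that these coefficients ``add to $\alpha$,'' but $\frac{\alpha-\gamma}{\alpha}+\frac{\gamma}{\alpha}=1$; the factor of $\alpha$ arises as $\gamma+(\alpha-\gamma)$, as above --- precisely the kind of exponent bookkeeping you flagged as the error-prone step. Your observation about the hypothesis is also well taken and worth recording: as stated, $a\geq 0$, $b\geq 0$ permits $a=0$ or $b=0$, in which case $F$ is monotone on $(0,\infty)$ and its infimum (which the displayed formula correctly evaluates as $0$) is not attained; the lemma is only literally true with ``attains'' replaced by ``has infimum,'' or under the strengthened hypothesis $a,b>0$. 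Since the paper applies the lemma in case $(IIa)$ with $a=\frac{q_n}{(1+c)^{\alpha}}>0$ and $b=e_n>0$, the strengthened hypothesis is what is actually used, so your version is the right one.
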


\begin{lemma}\label{l2}
For all $x\geq y$ and $\gamma \geq 1$ we have the following inequality

$$  x^\gamma-y^\gamma\geq \left(x-y\right)^\gamma.$$

\end{lemma}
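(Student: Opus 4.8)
The plan is to reduce the claimed inequality to the superadditivity of the power function $t\mapsto t^\gamma$ on $[0,\infty)$. Rewriting $x^\gamma-y^\gamma\ge(x-y)^\gamma$ as $x^\gamma\ge y^\gamma+(x-y)^\gamma$ and substituting $x=y+(x-y)$, the statement becomes $(u+v)^\gamma\ge u^\gamma+v^\gamma$ with $u=y$ and $v=x-y\ge 0$. So it suffices to establish this superadditivity for $u,v\ge 0$ and $\gamma\ge 1$.

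To prove it, I would fix $u\ge 0$ and define $g(v)=(u+v)^\gamma-u^\gamma-v^\gamma$ for $v\ge 0$. Then $g(0)=0$ and $g'(v)=\gamma\bigl[(u+v)^{\gamma-1}-v^{\gamma-1}\bigr]$. Since $\gamma-1\ge 0$, the map $t\mapsto t^{\gamma-1}$ is nondecreasing on $[0,\infty)$, and $u+v\ge v\ge 0$ forces $(u+v)^{\gamma-1}\ge v^{\gamma-1}$, hence $g'(v)\ge 0$. Thus $g$ is nondecreasing and $g(v)\ge g(0)=0$, which gives the desired inequality; translating back yields $x^\gamma-y^\gamma\ge(x-y)^\gamma$, with equality precisely when $\gamma=1$ or $y=0$.

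The main point to watch is the domain. The differentiation argument, and indeed the inequality itself, relies on $y\ge 0$ (equivalently $u\ge 0$): for a genuinely negative $y$ one checks with, e.g., $\gamma=3$, $x=1$, $y=-1$ that $x^\gamma-y^\gamma=2<8=(x-y)^\gamma$, so the statement fails. In the intended application the quantities to which Lemma~\ref{l2} is applied are nonnegative, being formed from the positive sequences and the shifted solution values appearing in equation~\eqref{e0}, so the hypothesis should be read as $x\ge y\ge 0$. A fully elementary alternative that avoids calculus is to write $(u+v)^\gamma=u^\gamma\bigl(1+v/u\bigr)^\gamma\ge u^\gamma\bigl(1+(v/u)^\gamma\bigr)=u^\gamma+v^\gamma$ for $u>0$, using $(1+t)^\gamma\ge 1+t^\gamma$ for $t\ge 0$ and $\gamma\ge 1$, and handling $u=0$ directly; establishing that one-variable inequality is the only real content of the proof.
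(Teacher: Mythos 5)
Your proof is correct, and there is nothing in the paper to compare it against: the paper states Lemma~\ref{l2} with no proof at all, treating it as a known fact. Your reduction of $x^\gamma-y^\gamma\ge(x-y)^\gamma$ to superadditivity $(u+v)^\gamma\ge u^\gamma+v^\gamma$ with $u=y$, $v=x-y$, and the monotonicity argument $g(v)=(u+v)^\gamma-u^\gamma-v^\gamma$, $g(0)=0$, $g'(v)=\gamma\bigl[(u+v)^{\gamma-1}-v^{\gamma-1}\bigr]\ge 0$ for $\gamma\ge 1$, is complete and standard; the Bernoulli-type alternative $(1+t)^\gamma\ge 1+t^\gamma$ you sketch is an equally valid elementary route.

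More importantly, your domain remark is a genuine correction to the paper's statement rather than a pedantic aside. Since $\gamma$ is a ratio of odd positive integers, $y^\gamma$ is defined for negative $y$, and the lemma as literally stated (``for all $x\ge y$'') is false: your counterexample $\gamma=3$, $x=1$, $y=-1$ gives $x^\gamma-y^\gamma=2<8=(x-y)^\gamma$. The hypothesis must be read as $x\ge y\ge 0$, and you rightly check that the sole application in the proof of Theorem~\ref{t1} --- the step in case (Ia) estimating $z_{n+2}^\gamma-z_{n+1}^\gamma\ge\left(\Delta z_{n+1}\right)^\gamma$ with $0<z_{n+1}<z_{n+2}$ --- respects this restriction. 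One small slip: equality holds not only when $\gamma=1$ or $y=0$ but also when $x=y$ (both sides vanish); this is immaterial, since the lemma asserts only the inequality.
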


\section{Almost Oscillation Criterion}

In this section, by using the Riccati substitution we will establish new almost oscillation
criterion for equation \eqref{e0}.

\begin{theorem}\label{t1}
Let 
\begin{equation}\label{P}
\left(r_n\right), \left(q_n\right) \mbox{ and } \left(e_n\right) \mbox{  be positive sequences,}
\end{equation}
sequence $(r_n)$ is bounded, it means that there exists positive real constant $R$ such that
\begin{equation}\label{R}
r_n \leq R \mbox{ for } n \in \bN.
\end{equation}
Assume also that 
\begin{equation}\label{AG}
\alpha>\gamma\geq 1 \mbox{ are ratios of odd positive integers.}
\end{equation} 
If there exist positive sequences $(p_n)$ such that
\begin{equation}\label{wzor 1}
\limsup_{n\rightarrow \infty} \sum_{i=1}^n\left(p_iQ_i-\frac{R\left(\Delta p_i\right)^2}{4p_i}\right)=\infty 
\end{equation}
and
\begin{equation}\label{wzor 2}
\sum_{i=1}^\infty \sum_{j=1}^{i-1} \left(M q_j\pm e_j\right)^{\frac{1}{\gamma}}=\infty,
\end{equation}
where $$Q_n^{*}=\frac{d^{\alpha-\gamma}q_n}{\left(1+c\right)^\alpha}-d^{-\gamma}e_n,$$ 
$$Q^{**}_n = \frac{\alpha q_n^{\frac{\gamma}{\alpha}}e_n^{1-\frac{\gamma}{\alpha}}}{\gamma^{\frac{\gamma}{\alpha}}\left(\alpha-\gamma\right)^{1-\frac{\gamma}{\alpha}}\left(1+c\right)^{\gamma}},$$
\begin{equation}\label{Qn}
Q_n = \min \lbrace{ Q_n^{*}, Q^{**}_n \rbrace},
\end{equation}
(here $d>0$ and $M>0$ are suitable constants), then every solution of equation \eqref{e0} is almost oscillatory.
\end{theorem}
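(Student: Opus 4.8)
The plan is to argue by contradiction. Suppose equation~\eqref{e0} has a solution $(x_n)$ that is \emph{not} almost oscillatory. Then $(x_n)$ is nonoscillatory, $(\Delta x_n)$ is nonoscillatory, and $x_n\not\to 0$. Since $\alpha$ and $\gamma$ are ratios of odd positive integers by~\eqref{AG}, replacing $(x_n)$ by $(-x_n)$ only reverses the sign in front of $e_n$; this accounts for the $\pm$ in~\eqref{wzor 2} and for the two expressions $Q_n^{*},Q_n^{**}$ entering~\eqref{Qn}, so I may assume $x_n>0$ for all large $n$. Put $z_n=x_n+cx_{n-k}$, so that $z_n>0$ and~\eqref{e0} reads $\Delta(r_n(\Delta z_n)^\gamma)=e_n-q_nx_{n+1}^\alpha$. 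Because $(\Delta x_n)$ is eventually of one sign, so is $(\Delta z_n)$, and I would split the analysis according to whether $\Delta x_n>0$ or $\Delta x_n\le 0$ for large $n$.

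In the first case $(x_n)$ is eventually increasing, hence bounded below by a constant $d>0$, and $x_{n+1-k}\le x_{n+1}$ yields the crucial lower bound $x_{n+1}\ge z_{n+1}/(1+c)$, together with $z_{n+1}\ge x_{n+1}\ge d$. I would then introduce the Riccati variable $w_n=p_n r_n(\Delta z_n)^\gamma/z_n^\gamma>0$ and compute $\Delta w_n$ by the discrete product and quotient rules, substituting $\Delta(r_n(\Delta z_n)^\gamma)=e_n-q_nx_{n+1}^\alpha$. Feeding in $x_{n+1}\ge z_{n+1}/(1+c)$ converts the forcing contribution into $-\left(\frac{q_n}{(1+c)^\alpha}z_{n+1}^{\alpha-\gamma}-\frac{e_n}{z_{n+1}^\gamma}\right)$; as this function of $z_{n+1}$ is increasing, evaluating at $z_{n+1}=d$ gives $-Q_n^{*}$, whereas in the reflected ($x_n<0$) situation the analogous step gives, via Lemma~\ref{l1} with $a=q_n/(1+c)^\alpha$ and $b=e_n$, the bound $-Q_n^{**}$; taking $Q_n=\min\{Q_n^{*},Q_n^{**}\}$ as in~\eqref{Qn} covers both. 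For the remaining negative term I would apply Lemma~\ref{l2} to get $\Delta(z_n^\gamma)\ge(\Delta z_n)^\gamma$, re-express it through $w_n$, and invoke $r_n\le R$ from~\eqref{R}. Completing the square in $w_n$ then produces, up to the usual index shift, $\Delta w_n\le -p_nQ_n+\frac{R(\Delta p_n)^2}{4p_n}$; a telescoping summation yields $w_{n+1}\le w_{n_0}-\sum_{i=n_0}^{n}\left(p_iQ_i-\frac{R(\Delta p_i)^2}{4p_i}\right)$, and~\eqref{wzor 1} forces the subtracted sum to have $\limsup=+\infty$, so $w_{n+1}\to-\infty$ along a subsequence, contradicting $w_n>0$.

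In the second case $(x_n)$ is eventually decreasing to a limit $L\ge 0$. If $L=0$ then $x_n\to 0$, so the solution is almost oscillatory, contrary to assumption; hence $L>0$, and choosing a suitable constant $M$ (for instance $M=L^\alpha$) gives $q_nx_{n+1}^\alpha\ge Mq_n$, whence $\Delta(r_n(\Delta z_n)^\gamma)\le e_n-Mq_n=-(Mq_n-e_n)$. Here $\Delta z_n<0$, so $r_n(\Delta z_n)^\gamma<0$; summing this inequality once bounds $-r_n(\Delta z_n)^\gamma$ below by a partial sum of the terms $Mq_j-e_j$, and solving for $\Delta z_n$ while using $r_n\le R$ gives a lower estimate for $-\Delta z_n$ of the order $R^{-1/\gamma}(\cdots)^{1/\gamma}$. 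A second summation, controlled by~\eqref{wzor 2}, then forces $z_n\to-\infty$, contradicting $z_n>0$. The reflected case $x_n<0$ is identical with $e_n$ replaced by $-e_n$, which is precisely the role of the $\pm$ in~\eqref{wzor 2}. Combining the two cases shows no non–almost–oscillatory solution can exist.

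The main obstacle I expect is in the first case: arranging the Riccati computation so that, after Lemma~\ref{l2} and the bound $r_n\le R$, the completion of the square produces exactly the weight $\frac{R(\Delta p_i)^2}{4p_i}$ of~\eqref{wzor 1}, while at the same time reconciling the two forcing estimates $Q_n^{*}$ (from monotonicity and the lower bound $d$) and $Q_n^{**}$ (from Lemma~\ref{l1}) through the minimum in~\eqref{Qn}. A secondary delicate point is that the lower bound $x_{n+1}\ge z_{n+1}/(1+c)$ rests on the eventual monotonicity of $(x_n)$, which is available exactly because the solution is assumed not almost oscillatory, so that $(\Delta x_n)$ is nonoscillatory.
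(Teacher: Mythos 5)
Your proposal is correct and follows essentially the same route as the paper: the same Riccati substitution (the paper takes $w_n=p_n r_n(\Delta z_n)^\gamma/z_{n+1}^\gamma$, your $z_n^\gamma$ variant differing only by the index shift you anticipate), the same use of Lemma~\ref{l2} and completion of the square to reach the weight $\frac{R(\Delta p_i)^2}{4p_i}$, the bound $Q_n^{*}$ via monotonicity of $G$ in the positive increasing case and $Q_n^{**}$ via Lemma~\ref{l1} in the reflected case, and the same double summation against \eqref{wzor 2} when $\Delta x_n<0$. If anything, your treatment of the decreasing case is slightly more careful than the paper's, which asserts $\lim x_n=l>0$ outright, whereas you correctly note that $l=0$ is excluded because $x_n\to 0$ would already make the solution almost oscillatory.
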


\begin{proof}
Set
\begin{equation}\label{z_n}
z_n=x_n+c_nx_{n-k}
\end{equation}  
then equation  \eqref{e0} takes the following form
$$ \Delta\left(r_n\left(\Delta z_n\right)^\gamma\right)=-q_nx_{n+1}^{\alpha}+e_n. $$

Suppose, for the contrary,  that sequence $\left(x_n\right)$ is a solution eventually of one sign of equation \eqref{e0} such that $(\Delta x_n)$ is eventually of one sign as well. \\

Assume first that $\left(x_n\right)$ is an eventually positive sequence.
It means that there exists $n_0 \in \bN$ such that $x_{n-k}>0$ for all $n\geq n_0$. 
We have two possibilities to consider: \\
$\left(Ia\right)\ \ \Delta x_n>0$ eventually or \\
$\left(Ib\right)\ \  \Delta x_n<0$ eventually.
 
Case $\left(Ia\right)$: Assume that  $\Delta x_n>0$. Then $\Delta z_n>0$. We have $x_n\geq \frac{z_n}{1+c}$, then from equation \eqref{e0}, we get
\begin{equation}\label{equation}
\Delta\left(r_n\left(\Delta z_n\right)^\gamma\right)\leq \frac{-q_n}{\left(1+c\right)^\alpha}z_{n+1}^{\alpha}+e_n.
\end{equation} 
Let us denote by $(w_n)$ the following sequence 
 \begin{equation}\label{w}
 w_n\colon=p_n\frac{r_n\left(\Delta z_n\right)^\gamma}{z_{n+1}^\gamma},
 \end{equation}
where $z_n$ is defined by \eqref{z_n}. Then $w_n>0$ for $n\geq n_0$. We have
\begin{align*}
\Delta w_n &= p_{n+1}\frac{r_{n+1}\left(\Delta z_{n+1}\right)^\gamma}{z_{n+2}^\gamma}-p_n\frac{r_n\left(\Delta z_n\right)^\gamma}{z_{n+1}^\gamma}=\\
& =p_n\frac{\Delta\left(r_n\left(\Delta z_n\right)^\gamma\right)}{z_{n+1}^\gamma}+  p_{n+1}\frac{r_{n+1}\left(\Delta z_{n+1}\right)^\gamma}{z_{n+2}^\gamma}- p_{n}\frac{r_{n+1}\left(\Delta z_{n+1}\right)^\gamma}{z_{n+2}^\gamma}+\\
&+p_{n}\frac{r_{n+1}\left(\Delta z_{n+1}\right)^\gamma}{z_{n+2}^\gamma}-p_{n}\frac{r_{n+1}\left(\Delta z_{n+1}\right)^\gamma}{z_{n+1}^\gamma}\\
 &=p_n\frac{\Delta\left(r_n\left(\Delta z_n\right)^\gamma\right)}{z_{n+1}^\gamma}+\frac{\Delta p_n}{p_{n+1}}w_{n+1}+\frac{p_n r_{n+1}\left(\Delta z_{n+1}\right)^\gamma}{z_{n+2}^\gamma z_{n+1}^\gamma}\left[z_{n+1}^\gamma-z_{n+2}^\gamma\right]
\end{align*}
and finally
\begin{equation}\label{deltawn}
\Delta w_n =p_n\frac{\Delta r_n\left(\Delta z_n\right)^\gamma}{z_{n+1}^\gamma}+ \frac{\Delta p_n}{p_{n+1}}w_{n+1}-\frac{ p_n}{p_{n+1}}w_{n+1}\frac{\Delta z_{n+1}^\gamma}{z_{n+1}^\gamma}.
\end{equation}
From the above and \eqref{equation}, we get
\begin{equation}\label{w_n}
 \Delta w_n\leq -p_n\left(\frac{q_n}{\left(1+c\right)^\alpha}z_{n+1}^{\alpha-\gamma}-\frac{e_n}{z_{n+1}^\gamma}\right)+\frac{\Delta p_n}{p_{n+1}}w_{n+1}-\frac{ p_n}{p_{n+1}}w_{n+1}\frac{\Delta z_{n+1}^\gamma}{z_{n+1}^\gamma}.
\end{equation}
Let $G\left(x\right)=\frac{q_n}{\left(1+c\right)^\alpha}x^{\alpha-\gamma}-\frac{e_n}{x^\gamma}$. It is easy to verify that function $G$ is increasing for positive arguments. Since $x$ is increasing, there is a constant $d>0$ such that $x\geq d >0 $ and
\begin{equation}\label{I1}
G\left(x\right)\geq \frac{q_n}{\left(1+c\right)^\alpha}d^{\alpha-\gamma}-e_nd^{-\gamma}\colon = Q_n^*.
\end{equation}
From $\eqref{w_n}$ and $\eqref{I1}$, we get the following inequality

\begin{equation*}
 \Delta w_n 
  \leq -p_n Q_n^*+ \frac{\Delta p_n}{p_{n+1}}w_{n+1}-\frac{ p_n}{p_{n+1}}w_{n+1}\frac{\left(\Delta z_{n+1}\right)^\gamma}{z_{n+2}^\gamma}.
\end{equation*}
For $\Delta z_n >0$ we have $z_{n+2} > z_{n+1}$ and $z_{n+2}^\gamma > z_{n+1}^\gamma$. Because of positivity of the sequence $(z_n)$ for large $n$, say $n \geq n_1 \geq n_0$, we obtain $\frac{1}{z_{n+2}^\gamma} < \frac{1}{ z_{n+1}^\gamma}$ for $n \geq n_1$. From \eqref{w} and by Lemma \ref{l2}, we get
\begin{equation}\label{w_n3}
\Delta w_n \leq  -p_n Q_n^*+ \frac{\Delta p_n}{p_{n+1}}w_{n+1}-\frac{ p_n}{p^2_{n+1}r_{n+1}}w^2_{n+1}
\end{equation}
This and \eqref{R} imply that
\begin{align*}
 \Delta w_n &
  \leq -p_n Q_n^*+ \frac{\left(\Delta p_n\right)^2 r_{n+1}}{4p_{n}}-\left[\sqrt{\frac{ p_n}{r_{n+1}}}\frac{1}{p_{n+1}}w_{n+1}-\frac{\sqrt{r_{n+1}}\Delta p_n}{2\sqrt{p_n}}\right]^2\\
  & \leq -p_n Q_n^*+ \frac{\left(\Delta p_n\right)^2 r_{n+1}}{4p_{n}}\leq-\left[p_n Q_n^*- \frac{\left(\Delta p_n\right)^2 R}{4p_{n}}\right].\\
\end{align*}
Summing both sides of the above inequality from $i=n_1$ to $n-1$, we obtain 
\[
w_n-w_{n_1} < -\sum_{i=n_1}^{n-1} \left(p_i Q_i^*-\frac{R(\Delta p_i)^2}{4p_{i}}\right).
\]
From positivity of $(w_n)$, we have $w_{n_1} > w_{n_1}-w_n$. Hence
\[
w_{n_1}>\sum_{i=n_1}^{n-1} \left(p_i Q_i^*-\frac{R(\Delta p_i)^2}{4p_{i}}\right).
\]
Letting $n$ into infinity we obtain
\[
w_{n_1}> \limsup_{n\rightarrow \infty} \sum_{i=n_1}^{n-1} \left(p_i Q_i^*-\frac{R(\Delta p_i)^2}{4p_{i}}\right).
\]
From \eqref{Qn}, we get
\[
w_{n_1}> \limsup_{n\rightarrow \infty} \sum_{i=n_1}^{n-1} \left(p_i Q_i-\frac{R(\Delta p_i)^2}{4p_{i}}\right).
\]
This is a contradiction with \eqref{wzor 1}.\\
%
%
%
%
%
Case $\left(Ib\right)$: If $\Delta x_n<0$ then $\Delta z_n<0$. From $x_n>0$ and $\Delta x_n<0$ we get $\lim \limits_{n\rightarrow \infty}x_n=l>0$. Then $x_{n+1}^\alpha\rightarrow l^\alpha>0$ as $n\rightarrow \infty$. Hence, there exists $n_2 \in \bN$ such that $x_{n+1}^\alpha \geq l^\alpha$ for $n\geq n_2$. Therefore,
we have
$$ \Delta\left(r_n\left(\Delta z_n\right)^\gamma\right)\leq -q_n l^\alpha+e_n. $$
Set $l^\alpha = M$. Summing the last inequality from $n_2$ to $n-1$, we obtain
 $$ r_n\left(\Delta z_n\right)^\gamma < r_n\left(\Delta z_n\right)^\gamma  - r_{n_2}\left(\Delta z_{n_2}\right)^\gamma  \leq -\left(\sum_{i=n_2}^{n-1}M q_i-e_i\right) $$
and 
 $$ \Delta z_n\leq -\left(\sum_{i=n_2}^{n-1}M q_i-e_i\right)^{\frac{1}{\gamma}}r_n^{-{\frac{1}{\gamma}}} , \textrm{ for } n\geq n_2. $$
Summing again the above  inequality from $n_2$ to $n$, we obtain
 $$ z_{n+1} \leq z_{n_2} -\sum_{i=n_2}^{n}\left(\sum_{j=n_2}^{i-1}M q_j-e_j\right)^{\frac{1}{\gamma}}r_i^{-{\frac{1}{\gamma}}}.$$ 
From \eqref{R}, we get 
$$z_{n+1} \leq z_{n_2} -R^{-{\frac{1}{\gamma}}}\sum_{i=n_2}^{n}\left(\sum_{j=n_2}^{i-1}M q_j-e_j\right)^{\frac{1}{\gamma}}. $$
Letting $n$ into $\infty$, from condition \eqref{wzor 2} we obtain that the right side of the above inequality is negative. So, $z_n$ is eventually negative, too. This contradiction ended the proof in this case.
\\

Finally, we assume that $\left(x_n\right)$ is an eventually negative sequence. It means that there exists $n_3 \in \bN$ such that $x_n<0$ for all $n\geq n_3$. We use the transformation $y_n = -x_n$ in the equation \eqref{e0}. Equation \eqref{e0} takes the following form 
 \begin{equation}\label{e00}
\Delta\left(r_n\left(\Delta \left(y_n+cy_{n-k}\right)\right)^{\gamma}\right)+q_ny_{n+1}^{\alpha} =- e_n.
 \end{equation}
Here sequence $(y_n)$ is an eventually positive solution of equation \eqref{e00}.
\\
We have two possibilities to consider: \\
$\left(IIa \right)\ \ \Delta y_n>0$ eventually or\\
$\left(IIb\right)\ \  \Delta y_n<0$ eventually. \\
Case $\left(IIa\right)$: Assume that  $\Delta y_n>0$. \\
From \eqref{deltawn}, by \eqref{e00},  we have
 \begin{equation*}
 \Delta w_n\leq -p_n\left(\frac{q_n}{\left(1+c\right)^\alpha}z_{n+1}^{\alpha-\gamma}+\frac{e_n}{z_{n+1}^\gamma}\right)+\frac{\Delta p_n}{p_{n+1}}w_{n+1}-\frac{ p_n}{p_{n+1}}w_{n+1}\frac{\Delta z_{n+1}^\gamma}{z_{n+1}^\gamma}.
\end{equation*}
Putting $a= \frac{q_n}{\left(1+c\right)^\alpha}$, $b=e_n$ and $x=z_{n+1}$ in \eqref{Fx}, we have 
\[
F(z_{n+1}) = \frac{q_n}{\left(1+c_{n+1}\right)^\alpha}z_{n+1}^{\alpha - \gamma}+\frac{e_n}{z_{n+1}^{\gamma}}.
\]
By Lemma \ref{l1}, we get 
\[
F(z_n) \geq \frac{\alpha q_n^{\frac{\gamma}{\alpha}}e_n^{1-\frac{\gamma}{\alpha}}}{\gamma^{\frac{\gamma}{\alpha}}\left(\alpha-\gamma\right)^{1-\frac{\gamma}{\alpha}}\left(1+c_{n+1}\right)^\gamma}=Q_n^{**}. 
\]
and 
\[
\Delta w_n \leq  -p_n Q_n^{**}+ \frac{\Delta p_n}{p_{n+1}}w_{n+1}-\frac{ p_n}{p^2_{n+1}r_{n+1}}w^2_{n+1}
\]
is satisfied. The rest of the proof is similar to proof of case $\left(Ia\right)$ and hence is omitted.\\
Case $\left(IIb\right)$: Assume that  $\Delta y_n<0$. Hence sequence $y_n$ has positive limit and the proof of this case is similar to case $\left(Ib\right)$ and hence is omitted.\\
 The proof is now complete.
\end{proof}
We illustrate the Theorem \ref{t1} by the following examples.

\begin{example}
Let us consider the difference equation

$$
\Delta\left((2-\frac{(-1)^n}{n})\left(\Delta \left(x_n+\frac{1}{2} x_{n-1}\right)\right)^{3}\right)+4 x_{n+1}^{5}=\frac{1}{n(n+1)}.
$$
Here $r_n =2-\frac{(-1)^n}{n}$, $c=\frac{1}{2}$, $\gamma=3$, $q_n = 4$, $\alpha = 5$ and $e_n = \frac{1}{n(n+1)}$. For $p_n=1$, all assumptions of Theorem \ref{t1} are satisfied. Hence, any solution of the above equation is almost oscillatory. Sequence $x_n=(-1)^{n+1}$ is one of such solutions. Here, $\left(x_n\right)$ is oscillatory. 
\end{example}

\begin{example}
Let us consider the difference equation

$$
\Delta\left((2+(-1)^n)\left(\Delta \left(x_n+2 x_{n-2}\right)\right)\right)+ x_{n+1}^{3}=14+11(-1)^{n+1}.
$$
Here $r_n =2+(-1)^n$, $c=2$, $\gamma=1$, $q_n =1$, $\alpha = 3$ and $e_n = 14+11(-1)^{n+1}$. For $p_n=1$, all assumptions of Theorem \ref{t1} are satisfied. Hence, any solution of the above equation is almost oscillatory. Sequence $x_n=2+(-1)^{n+1}$ is one of such solutions. Here, $\left(x_n\right)$ is nonoscillatory but $(\Delta x_n)$ oscillates. 
\end{example}

\begin{example}
Let us consider the difference equation

$$
\Delta\left(\frac{1}{3n+4}\left(\Delta \left(x_n+2 x_{n-1}\right)\right)\right)+n\left(n+2\right)^2  x_{n+1}^{3}=\frac{3+n^2\left(n+1\right)\left(n+3\right)}{n\left(n+1\right)\left(n+2\right)\left(n+3\right)}.
$$
Here $r_n =\frac{1}{3n+4}$, $c=2$, $k=1$, $\gamma=1$, $q_n = n\left(n+2\right)^2$, $\alpha = 3$ and 
\[
e_n =\frac{3+n^2\left(n+1\right)\left(n+3\right)}{n\left(n+1\right)\left(n+2\right)\left(n+3\right)}.
\]
For $p_n=1$, all assumptions of Theorem \ref{t1} are satisfied. Hence, any solution of the above equation is almost oscillatory. In fact, sequence $x_n=\frac{1}{n+1}$ is one of such solutions. Here, $\left(x_n\right)$ tends to zero.  
\end{example}

Assuming that $c = 0$ and $\gamma = 1$ equation \eqref{e0} takes the form \eqref{SL}. 
\begin{corollary}
Assume that conditions \eqref{P}, \eqref{R} and \eqref{AG} are held. If there exist positive sequences $(p_n)$ such that 
\begin{equation*}
\limsup_{n\rightarrow \infty} \sum_{i=1}^n\left(p_iQ_i-\frac{R\left(\Delta p_i\right)^2}{4p_i}\right)=\infty 
\end{equation*}
and
\begin{equation*}
\sum_{i=1}^\infty \sum_{j=1}^{i-1} \left(M q_j\pm e_j\right)=\infty
\end{equation*}
where $$Q_n^{*}=\frac{d^\alpha q_n-e_n}{d},$$ 
$$Q^{**}_n = \frac{\alpha q_n^{\frac{1}{\alpha}}e_n^{1-\frac{1}{\alpha}}}{\left(\alpha-1\right)^{1-\frac{1}{\alpha}}},$$
and
\begin{equation*}
Q_n = \min \lbrace{ Q_n^{*}, Q^{**}_n \rbrace},
\end{equation*}
(here $d>0$ and $M>0$ are suitable constants), then every solution of equation \eqref{SL} is almost oscillatory.
\end{corollary}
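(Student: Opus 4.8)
The plan is to recognize the statement as a direct specialization of Theorem~\ref{t1} and to reduce it to that theorem rather than repeating the Riccati argument from scratch. Setting $c=0$ and $\gamma=1$ in the neutral equation \eqref{e0} annihilates the shifted term, so that $z_n=x_n+cx_{n-k}$ collapses to $z_n=x_n$ and the quasidifference $r_n\left(\Delta z_n\right)^\gamma$ becomes $r_n\Delta x_n$; hence \eqref{e0} is literally \eqref{SL}. I would therefore verify that, under these two substitutions, every hypothesis and every auxiliary quantity appearing in Theorem~\ref{t1} coincides with its counterpart in the corollary, and then simply invoke Theorem~\ref{t1}.

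The verification is essentially bookkeeping. With $\gamma=1$ and $c=0$ the quantity
\[
Q_n^{*}=\frac{d^{\alpha-\gamma}q_n}{\left(1+c\right)^\alpha}-d^{-\gamma}e_n
\]
becomes $d^{\alpha-1}q_n-d^{-1}e_n=\frac{d^\alpha q_n-e_n}{d}$, matching the corollary's $Q_n^{*}$. Likewise,
\[
Q_n^{**}=\frac{\alpha q_n^{\frac{\gamma}{\alpha}}e_n^{1-\frac{\gamma}{\alpha}}}{\gamma^{\frac{\gamma}{\alpha}}\left(\alpha-\gamma\right)^{1-\frac{\gamma}{\alpha}}\left(1+c\right)^\gamma}
\]
reduces, upon substituting $\gamma=1$ (so that $\gamma^{\gamma/\alpha}=1$) and $c=0$, to $\frac{\alpha q_n^{1/\alpha}e_n^{1-1/\alpha}}{\left(\alpha-1\right)^{1-1/\alpha}}$, the stated $Q_n^{**}$. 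The definition $Q_n=\min\{Q_n^{*},Q_n^{**}\}$ and condition \eqref{wzor 1} are carried over unchanged, while in \eqref{wzor 2} the exponent $1/\gamma$ equals $1$, so that condition becomes $\sum_{i=1}^\infty\sum_{j=1}^{i-1}\left(Mq_j\pm e_j\right)=\infty$, exactly as required.

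Finally I would confirm that the surviving hypotheses of Theorem~\ref{t1} are genuinely inherited: conditions \eqref{P}, \eqref{R} and \eqref{AG} are assumed verbatim in the corollary, and with $\gamma=1$ the constraint $\alpha>\gamma\geq 1$ forces $\alpha>1$, which keeps the minimization in Lemma~\ref{l1} valid (its hypotheses $a,b\geq 0$ and $\alpha>\gamma\geq 1$ are met). No genuine obstacle arises, since the corollary is a pure instance of the theorem; the only point deserving care is checking that the parameter degeneracy $\gamma=1$, $c=0$ does not invalidate any earlier step—in particular that $Q_n^{**}$ remains well defined, which it does because $\alpha>1$ and the sequences are positive—after which the conclusion that every solution of \eqref{SL} is almost oscillatory follows immediately from Theorem~\ref{t1}.
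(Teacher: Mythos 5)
Your proposal is correct and matches the paper's intended argument exactly: the paper offers no separate proof, presenting the corollary as the specialization $c=0$, $\gamma=1$ of Theorem~\ref{t1} (noting that equation \eqref{e0} then becomes \eqref{SL}), which is precisely the reduction and bookkeeping you carry out. Your verification that $Q_n^{*}$, $Q_n^{**}$, and conditions \eqref{wzor 1}--\eqref{wzor 2} specialize as stated is exactly what the paper leaves implicit.
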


{\small

}

\end{document}